\documentclass[12pt,reqno]{amsart}

\setlength{\textheight}{23cm}
\setlength{\textwidth}{16cm}
\setlength{\topmargin}{-0.8cm}
\setlength{\parskip}{0.3\baselineskip}
\hoffset=-2.5cm

\usepackage{amssymb}

\newtheorem{theorem}{Theorem}[section]
\newtheorem{proposition}[theorem]{Proposition}

\newtheorem{corollary}[theorem]{Corollary}

\numberwithin{equation}{section}

\baselineskip=15.5pt

\begin{document}

\title[Equality of algebraic and geometric ranks of Cartan subalgebras]{Equality of the
algebraic and geometric ranks of Cartan subalgebras and
applications to linearization of a system of ordinary differential equations}

\author[H. Azad]{Hassan Azad}

\address{Department of Mathematics and Statistics, King Fahd University,
Saudi Arabia}

\email{hassanaz@kfupm.edu.sa}

\author[I. Biswas]{Indranil Biswas}

\address{School of Mathematics, Tata Institute of Fundamental Research, Homi
Bhabha Road, Bombay 400005, India}

\email{indranil@math.tifr.res.in}

\author[F. M. Mahomed]{Fazal M. Mahomed}

\address{DST-NRF Centre of Excellence in Mathematical and Statistical Sciences,
School of Computer Science and Applied Mathematics, University of the Witwatersrand,
Johannesburg, Wits 2050, South Africa}

\email{Fazal.Mahomed@wits.ac.za}

\subjclass[2000]{34A26, 37C10, 57R30.}

\keywords{System of ordinary differential equations, vector fields, semisimple Lie
algebra, geometric rank.}

\date{}

\begin{abstract}
If $L$ is a semisimple Lie algebra of vector fields on ${\mathbb R}^N$ with a split 
Cartan subalgebra $C$, then it is proved that the dimension of the generic orbit of 
$C$ coincides with the dimension of $C$. As a consequence one obtains a local 
canonical form of $L$ in terms of exponentials of coordinate functions and vector 
fields that are independent of these coordinates -- for a suitable choice of 
coordinates. This result is used to classify semisimple algebras of vector fields on 
${\mathbb R}^3$ and to determine all representations of ${\rm sl}(N, {\mathbb R})$ 
as vector fields on ${\mathbb R}^N$. These representations are used to find 
linearizing coordinates for any second order ordinary differential equation that 
admits ${\rm sl}(3, {\mathbb R})$ as its symmetry algebra and for a system of two 
second order ordinary differential equations that admits ${\rm sl}(4, {\mathbb R})$ 
as its symmetry algebra.
\end{abstract}

\maketitle

\tableofcontents

\section{Introduction}

Our aim here is to illustrate a general observation -- namely, that the presence
of large abelian subalgebras of the symmetry algebra of differential equations -- or
systems thereof -- is behind the various schemes for finding coordinates that linearize
a given linearizable equation.

This is illustrated in detail for a single ordinary differential equation and a system
of two ordinary differential equations.
This is achieved by investigating the local canonical form of semisimple algebras of
vector fields on ${\mathbb R}^N$.

As is well known, the symmetry algebra of the equation $y''(x) \,=\, 0$ is
${\rm sl}(3, {\mathbb R})$ and the
symmetry algebra of the system of two equations $y''(x) \,=\, 0$, $z''(x) \,=\, 0$ is
${\rm sl}(4, {\mathbb R})$; see \cite{Li}, \cite{Ib}.
More generally, the symmetry algebra of a system of $n$ free particles is
${\rm sl}(n+2, {\mathbb R})$ \cite{GL}, \cite{AKM}.

Using the local canonical forms of semisimple algebras of vector field, the
representations of ${\rm sl}(n, {\mathbb R})$ as vector fields on ${\mathbb R}^{n-1}$
are determined. Declaring one variable as the independent and the remaining as
dependent variables and computing the invariants in the second prolongations gives
the canonical forms of the equations considered as well as the linearizing
coordinates coming from abelian subalgebras of maximal dimension and maximal rank.

The methods used give at the same time a list, in terms of root systems, of
semisimple subalgebras of vector fields on ${\mathbb R}^{3}$. The classification of all
subalgebras of vector fields on ${\mathbb R}^{3}$ is one of the main topics in
Ch. 1--4 of \cite{Li2}; the reader is referred to the survey \cite{Ko}.

We have organized this paper in two parts. Section \ref{se2} deal with general
properties of semisimple Lie algebras of vector fields, and the rest deals with
applications to linearization.

\section{Geometric rank of Cartan subalgebras}\label{se2}

The geometric rank of a Lie algebra of vector fields is the maximum of the
dimensions of the leaves of the corresponding foliation. It is same as the
dimension of the generic leaf. The main result of this section is as follows:

\begin{theorem}\label{thm1}
Let $L$ be a semisimple Lie algebra of vector fields on ${\mathbb R}^N$ which has a
split Cartan subalgebra $C$. Then the dimension of $C$ equals the geometric rank of $C$.

Moreover, in suitable coordinates $x_1, \, x_2,\, \cdots ,\, x_N$, the root spaces
corresponding to a given system of simple roots and their negatives are of the form
$\exp(x_i)V_i$, $\exp(-x_i)W_i$, $i\,=\,1,\, \cdots,\, n$, where $V_i$ and $W_i$ are
vector fields whose coefficients with respect to the basis $\partial_{x_1},\,
\cdots,\, \partial_{x_N}$ are independent of $x_1,\, \cdots,\, x_n$,
and the linear span of the system of vector fields
$$
[\exp(x_i)V_i,\, \exp(-x_i)W_i]\,=\, H_i\, , \ \ \ i\,=\, 1,\, \cdots,\, n
$$
is that of $\{\partial_{x_1},\, \cdots,\, \partial_{x_n}\}$.

For any $1\,\leq\, i\,\leq\, n$, the Lie algebra generated by $\exp(x_i)V_i,\,
\exp(-x_i)W_i$ is a copy of ${\rm sl}(2,{\mathbb R})$.
\end{theorem}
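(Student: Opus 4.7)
The plan is to prove the rank equality first and then use the resulting integrable distribution to straighten $C$ into coordinate vector fields, from which the exponential form and the $\mathfrak{sl}_2$ claim drop out by direct computation. For the rank equality, my strategy is by contradiction: suppose that at a generic point $p$ the evaluation map $C \to T_p\mathbb{R}^N$, $H\mapsto H(p)$, has a positive-dimensional kernel $K_p$. A useful first observation is that $K_p$ is constant along the $C$-orbit of $p$, because $\mathrm{Ad}(\exp X)$ restricts to the identity on $C$ for every $X\in C$, so $K$ really lives on the transverse manifold to the $C$-foliation. The target is to produce a single fixed $H\in C\setminus\{0\}$ whose zero set contains an open set. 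Once that is done, the root-space relation $[H,E_\alpha] = \alpha(H)E_\alpha$ forces $E_\alpha$ to vanish on that same open set for each root $\alpha$ with $\alpha(H)\neq 0$; cascading through $[E_\alpha,E_{-\alpha}]\in C$ and $[E_\alpha,E_\beta]\propto E_{\alpha+\beta}$, and using that the roots span $C^*$ and generate $L$ as a Lie algebra, one concludes that every element of $L$ vanishes on that open set. Analyticity of the vector fields, implicit in this classical Lie-theoretic setting, then forces $L=0$, contradicting semisimplicity.

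Once the evaluation $C\to T_p\mathbb{R}^N$ is injective at a generic $p$, pick a basis of $C$ dual to a chosen system of simple roots $\alpha_1,\ldots,\alpha_n$. Near a generic point this basis consists of $n$ pairwise commuting linearly independent analytic vector fields, and the simultaneous-straightening theorem produces coordinates $x_1,\ldots,x_N$ in which this dual basis equals $\partial_{x_1},\ldots,\partial_{x_n}$. In these coordinates $[\partial_{x_i},E_{\alpha_j}] = \alpha_j(\partial_{x_i})E_{\alpha_j} = \delta_{ij}E_{\alpha_j}$, which reads componentwise as a decoupled linear system $\partial_{x_i}f = \delta_{ij}f$ for each coefficient $f$ of $E_{\alpha_j}$ in the basis $\partial_{x_1},\ldots,\partial_{x_N}$; integration gives $f = \exp(x_j)\,g(x_{n+1},\ldots,x_N)$, hence $E_{\alpha_j} = \exp(x_j)V_j$ with $V_j$ independent of $x_1,\ldots,x_n$ in the required sense. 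The identical computation applied to $L_{-\alpha_j}$ gives $F_{\alpha_j} = \exp(-x_j)W_j$.

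For the last two assertions, $[E_{\alpha_i},F_{\alpha_i}]$ lies in $[L_{\alpha_i},L_{-\alpha_i}]\subset C$, which by the classical structure theory of split semisimple Lie algebras is one-dimensional and spanned by the coroot $h_{\alpha_i}$. The simple coroots are a basis of $C$, so the $n$ vector fields $H_i := [\exp(x_i)V_i,\exp(-x_i)W_i]$ span $C = \mathrm{span}(\partial_{x_1},\ldots,\partial_{x_n})$. The triple $\{E_{\alpha_i},F_{\alpha_i},H_i\}$ then satisfies the $\mathfrak{sl}_2$ commutation relations, after a harmless rescaling of $V_i$ and $W_i$ to normalize the bracket constant, and since $C$ is split over $\mathbb{R}$ the isomorphism class is $\mathfrak{sl}(2,\mathbb{R})$. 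The main obstacle of the argument is the rank equality: upgrading the pointwise existence of a nonzero element of $K_p$ into a single fixed $H\in C\setminus\{0\}$ whose zero set has interior is the one genuinely non-routine step, and depends on the constancy of $K_p$ along $C$-orbits together with analyticity of the vector fields, after which everything else is dictated by the explicit commutation relations of the split root system.
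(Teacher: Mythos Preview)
The gap is in the rank equality. You correctly identify the crux as producing a single fixed $H \in C\setminus\{0\}$ whose zero set has nonempty interior, but the two ingredients you invoke---constancy of $K_p$ along $C$-orbits and analyticity---do not suffice. Take the commuting pair $H_1=\partial_x$, $H_2=(1+y)\,\partial_x$ on $\mathbb{R}^2$: they are analytic, linearly independent in the abstract algebra, and $K_p=\mathbb{R}\cdot\bigl(H_2-(1+y)H_1\bigr)$ depends only on $y$, hence is constant along the $C$-orbits (the $x$-lines); yet every nonzero $aH_1+bH_2=(a+b+by)\,\partial_x$ has zero set either empty or the single line $y=-(a+b)/b$. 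No fixed $H$ vanishes on an open set. This pair is of course not the Cartan of a semisimple algebra of vector fields, but your argument for this step uses nothing beyond commutativity and analyticity, so the example shows it cannot go through as written. Your downstream cascade through the root vectors is fine once it has a starting $H$, but it never gets one. (A smaller point: you invoke analyticity for unique continuation, which the theorem does not assume and the paper does not use.)

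The paper's route is different: induction on $\dim C$. The subalgebra generated by $X_{\pm\alpha_1},\ldots,X_{\pm\alpha_m}$ is semisimple of rank $m$, so by the inductive hypothesis one may take $H_i=\partial_{x_i}$ for $i\le m$ and $X_{\alpha_\ell}=\exp(x_\ell)V_\ell$ with $V_\ell$ independent of $x_1,\ldots,x_m$. If adding $H_{m+1}$ did not raise the rank, commutativity would force $H_{m+1}=\sum_{i\le m} f_i\,\partial_{x_i}$ with each $f_i$ independent of $x_1,\ldots,x_m$ and some $f_\ell$ nonconstant. Splitting $V_\ell=U_1+U_2$ along and across $\partial_{x_1},\ldots,\partial_{x_m}$ and comparing both sides of $[H_{m+1},X_\ell]=\lambda X_\ell$ modulo $\partial_{x_1},\ldots,\partial_{x_m}$ yields $f_\ell\cdot U_2=\lambda\, U_2$, hence $U_2=0$ (since $f_\ell$ is nonconstant), and then $U_1=0$ as well, so $X_\ell=0$---the contradiction. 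The root structure enters through this single eigenvector computation, not through a globally vanishing Cartan element. Your coordinate step afterward (dualizing to the simple roots before straightening, so that $E_{\alpha_j}=\exp(x_j)V_j$ falls out immediately) is actually cleaner than the paper's version, which straightens the coroots $H_{\alpha_i}$ and then must invert the Cartan matrix by a linear change of variables; but you need a working proof of the rank equality first.
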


Theorem \ref{thm1} is very useful in finding -- up to local equivalence -- semisimple
algebras of vector fields in ${\mathbb R}^N$ for $N \,\leq\, 3$.
The local classification of all Lie algebras of vector fields in
${\mathbb R}^N$, $N \,\leq\, 3$,
is one of the main themes in S. Lie's work \cite{Li2}.

The importance of such a classification is in the canonical forms of differential
equations for which Lie invented his theory of prolongations and differential
invariants.

For an abelian algebra, the geometric rank and dimension, in general, are not equal.
For example the linearly independent vector fields $\partial_x,\, y\partial_x,\,
y^2\partial_x,\,\cdots,\, y^k\partial_x$ commute for every $k$.
Also, for any Lie algebra of vector fields on ${\mathbb R}^N$ the dimension of its
generic orbit is the same as the geometric rank of a certain abelian algebra of
vector fields -- as shown in \cite{ABGM}.

Recall that if $L$ is a Lie algebra of vector fields on ${\mathbb R}^N$ then the
generic rank of the matrix of coefficients of a basis of $L$ in the standard basis
$\partial_{x_1},\,\cdots,\, \partial_{x_N}$
of vector fields on ${\mathbb R}^N$ is the dimension of a generic leaf for $L$.
It is thus an invariant of the algebra $L$.

\subsection{Preliminaries and notation}\label{sec2.2}

The proof of Theorem \ref{thm1} uses the following standard results about roots of a Lie
algebra, for which the reader is referred to \cite{Bo}, \cite{HN}, \cite{Kn}

Let $C$ be a split Cartan subalgebra of $L$ and $R$ its root system. Let
$S \,=\,\{\alpha_1,\, \cdots,\, \alpha_n\}$ be the simple roots of $C$ for a choice of
positive roots. The root spaces are
subspaces of $L$ normalized but not centralized by $C$. The corresponding linear
functions are the roots of $C$. Each root space is one dimensional and a nonzero vector
in a root space is called a root vector. For each root $r$ we choose a nonzero element
$X_r$ in the corresponding root space. The algebra $L$ is generated as a vector space by
the Cartan subalgebra $C$ and the root vectors $\{X_r\}_{r\in R}$ with
$$[X_r,\, X_s]\,=\, N_{r,s} X_{r+s}\, ,$$
where $N_{r,s}\, \not=\, 0$ if and only if $r + s$ is a root. Moreover
$[X_r,\, X_{-r}]\,=\, H_r$ is a nonzero element of $C$ and the Lie algebra
generated by the pair of root vectors $\{X_r,\,X_{-r}\}$ is a copy of
${\rm sl}(2,{\mathbb R})$.

The root vectors $X_{\alpha_i},\, X_{-\alpha_i}$, $i \,=\, 1,\,\cdots,\, n$, generate
$L$ as a Lie algebra. For a simple root $\alpha_i$, we denote the element
$[X_{\alpha_i},\, X_{-\alpha_i}]$ by $H_i$.

\subsection{Proof of Theorem \ref{thm1}}

Let $n$ be the dimension of the Cartan subalgebra $C$. If $n \,=\, 1$ then
-- using the notation of Section \ref{sec2.2} -- in a neighborhood of a point where
$H_1$ is not zero, we can find coordinates $x_1,\, \cdots,\, x_N$
in which $H_1 = \, \partial_{x_1}$. The vector
fields $V$ which are eigenvectors of $H_1$ in the sense that
$[H_1,\, V] \,=\,\lambda\cdot V$ are of the
form $V\, =\, \exp(\lambda x_1) U$, where $U$ is a vector field whose coefficients in the
basis $\partial_{x_1},\, \cdots,\, \partial_{x_N}$
are independent of $x_1$. Our algebra is generated by eigenvectors of
$H_1$ for nonzero and opposite eigenvalues. Thus if we substitute $\lambda x_1$
in place of $x_1$, and leave the
other coordinates unchanged, then in this coordinate system the algebra is generated by
vector fields $\exp(x_1) V_1,\, \exp(-x_1) W_1$, where $V_1$ and $W_1$ are vector fields
whose coefficients in the basis $\partial_{x_1},\,
\cdots,\, \partial_{x_N}$ are independent of $x_1$.

We will employ induction. Assume that the theorem is proved for all $C$
with $\dim C \,\leq\, m$.

Now let the dimension of the Cartan algebra $C$ be $n \,= \,m + 1$.

The algebra generated by the root vectors $\{X_{\alpha_i},\,
X_{-\alpha_i}\}_{i=1}^m$ is semisimple. By the induction
hypothesis, the rank and dimension of the system of vector fields
$\{H_i\}_{i=1}^m$ is $m$. As these are commuting vector fields, we
can introduce coordinates in which $H_i\,=\, \partial_{x_i}$, $1\,
\leq\, i\,\leq\, m$. Moreover, again by the induction hypothesis,
root vectors of this subalgebra corresponding to the simple roots
and their negatives are of the form $\exp(x_i) V_i,\, \exp(-x_i)
W_i$, $1\, \leq\, i\,\leq\, m$, where $V_i$ and $W_i$ are vector
fields whose coefficients in the basis $\partial_{x_1},\,
\cdots,\, \partial_{x_N}$ are independent of $x_1,\, \cdots,\, x_m$.

If the rank of $H_1,\, \cdots,\, H_m,\, H_{m+1}$ is less than $m + 1$, then as
$H_{m+1}$ commutes with $H_1,\, \cdots,\, H_m$, it can be written as
$$
H_{m+1}\, =\, f_1\partial_{x_1}+\ldots + f_m\partial_{x_m}
$$
with $\partial_{x_j}f_i\,=\, 0$ for all $1\, \leq\, i,\, j\, \leq\, m$.
Not all the coefficient functions $f_i$, $i \,=\, 1,\,\cdots, \, m$,
can be constant because $H_1,\, \cdots,\, H_{m+1}$
are linearly independent, so say $f_\ell$ is not a constant function.

The root vector $X_\ell$ can be written as
$$
X_\ell \,=\,\exp(x_\ell)V_\ell\, ,
$$
where $V_\ell$ is a vector field whose coefficients in the basis
$\partial_{x_1},\,
\cdots,\, \partial_{x_N}$ are independent of $x_1,\, \cdots,\, x_m$.

We write $$V_\ell\,=\, U_1+U_2\,$$ where $U_1$ and $U_2$ are vector fields with
$$
U_1\,=\,\sum_{i=1}^m g_i\partial_{x_i}\ \ \ \text{ and }\ \ \
U_2\,=\,\sum_{i=m+1}^N g_i\partial_{x_i}
$$
such that all $g_i$ are independent of $x_1,\, \cdots,\, x_m$.

The root vector $X_\ell \,=\,\exp(x_\ell)V_\ell$ is an eigenvector for $H_{m+1}$
with eigenvalue, say $\lambda$. Now we use the formula for Lie derivative of
vector fields
$$
[H,\, \exp(\chi)V]\,=\, \exp(\chi)(H(\chi)V+[H,\, V])\, .
$$
Notice that any two vector fields which are combinations of
$\partial_{x_1},\, \cdots,\, \partial_{x_m}$ with
coefficients that are independent of $x_1,\, \cdots, \, x_m$ actually commute.
Hence
$$
[H_{m+1},\, X_\ell]\,=\, \exp(x_\ell)(H_{m+1}(x_\ell)(U_1+U_2)+
[H_{m+1},\, U_1]+ [H_{m+1},\, U_2])
$$
$$
=\, \exp(x_\ell)(H_{m+1}(x_\ell)(U_1+U_2)+ [H_{m+1},\, U_2])\,=\,
\lambda\cdot \exp(x_\ell)(U_1+U_2)\, .
$$
Thus
\begin{equation}\label{e1}
H_{m+1}(x_\ell)(U_1+U_2)+ [H_{m+1},\, U_2]\,=\, \lambda(U_1+U_2)\, .
\end{equation}

Now $H_{m+1}$ is a combination of $\partial_{x_1},\,
\cdots,\, \partial_{x_m}$ and $U_2$ is a combination of
$\partial_{x_{m+1}},\,\cdots,\, \partial_{x_N}$, and all the coefficients of both
the vector fields are independent of $x_1,\, \cdots,\, x_m$. Consequently,
$[H_{m+1},\, U_2]$ is a combination of $\partial_{x_1},\,
\cdots,\, \partial_{x_m}$. Taking the two sides of \eqref{e1}
modulo $\partial_{x_1},\, \cdots,\, \partial_{x_m}$ we see that
$$
H_{m+1}(x_\ell)\cdot U_2\,=\, \lambda\cdot U_2\, .
$$
But $H_{m+1}(x_\ell)$ is not a constant. Thus $U_2$ must be identically zero.
Now \eqref{e1} reads
$$
H_{m+1}(x_\ell)\cdot U_1\,=\, \lambda\cdot U_1\, ,
$$
so $U_1$ must also be identically zero. Consequently, $X_\ell$
must be identically zero. This is a contradiction. Therefore, we conclude that
the rank of $H_1,\, \cdots ,\, H_m,\, H_{m+1}$ is $m + 1$.

To complete the proof, take a standard set of generators
$\{X_{\alpha_i},\, X_{-\alpha_i}\}_{i=1}^n$ corresponding to the simple roots
$\alpha_1,\,\cdots,\, \alpha_n$. The Cartan subalgebra is spanned by
$$
H_{\alpha_i}\, =\, [X_{\alpha_i},\, X_{-\alpha_i}]\, ,\ \ \ i\,=\, 1,\, \cdots,\, n\, .
$$
As seen above, we can introduce coordinates $x_1,\, \cdots,\, x_N$ in which, because of
commutativity of the $H_{\alpha_1},\, \cdots,\, H_{\alpha_n}$ and the rank of
the system of vector fields $\{ H_{\alpha_1},\, \cdots,\, H_{\alpha_n}\}$ being $n$, the
vector field $H_{\alpha_i}$ becomes $\partial_{x_i}$ for all $1\,\leq\, i\,
\leq\, n$.

The root vectors corresponding to the simple roots and their negatives are
$$
X_{\alpha_i}\,=\, \exp(\chi_{_i}) V_i\, ,\ \ X_{-\alpha_i}\,=\, \exp(-\chi_{_i}) W_i\, ,
$$
where $\chi_{_i}$ are linear functions given by $\chi_{_i} \,=\,\sum_{j=1}^n c_{ij}x_j$
and $c_1,\, \cdots,\, c_n$ are entries of the Cartan matrix defined by the simple system
of roots and the vector fields $V_i,\, W_i$, $i\,=\, 1,\, \cdots,\, n$,
have coefficients that are independent of the coordinates $x_1,\,\cdots,\, x_n$.

As the Cartan matrix is nonsingular, we can make a linear change of variables
$\widetilde{x}_i \,=\,\chi_{_i}$, $i\,=\, 1,\, \cdots,\, n$, while leaving the remaining
variables, if there is any, unchanged.

Thus in these variables the root vectors corresponding to the simple roots and their
negatives have the form stated in the theorem.

\section{Applications of Theorem \ref{thm1}}

Let us apply Theorem \ref{thm1} to determine local representations of ${\rm
sl}(n,{\mathbb R})$ as vector fields on ${\mathbb R}^n$. This will be used in an
essential way in finding invariant systems of differential equations as well as all
semisimple algebras of vector fields in ${\mathbb R}^n$, with $n\, \leq\, 3$.

As is well known, any semisimple Lie algebra with a split Cartan subalgebra is
generated by copies of ${\rm sl}(2, {\mathbb R})$ -- one copy for each node of the
Dynkin diagram. The copies on adjacent nodes form a rank two subalgebra; moreover for
root systems with only single bonds, the rank two subalgebras are copies of ${\rm
sl}(3, {\mathbb R})$ or ${\rm sl}(2, {\mathbb R})\times{\rm sl}(2, {\mathbb R})$.

Thus, it is important to determine representations of subalgebras as vector fields
on ${\mathbb R}^N$ of the type specified in the following proposition.

\begin{proposition}\label{prop1}
Any representation of ${\rm sl}(3, {\mathbb R})$ as vector fields on
${\mathbb R}^N$ whose root spaces for the
simple roots are of the form $X_\alpha \,=\, \exp(x)U$, $X_\beta \,=\,
\exp(y)V$, where $U$ and $V$ are constant
vector fields must be -- up to multiplicative non-zero constants -- of the form
$$
X_\alpha \,=\, \exp(x)(\partial_x +U_1)\, ,\ \ \
X_\beta \,=\, \exp(y)(\partial_x-\partial_y +V_1)
$$
or
$$
X_\alpha \,=\, \exp(x)(\partial_x-\partial_y +U_1)\, ,
\ \ \ X_\beta \,=\, \exp(y)(\partial_y +V_1)\, ,
$$
where $U_1 ,\, V_1$ are constant vector fields whose
$\partial_x$ and $\partial_y$ components vanish
for the basis
$$\partial_x\,=\, \partial_{x_1},\,
\partial_y\,=\, \partial_{x_2},\,
\partial_{x_3},\, \cdots,\, \partial_{x_N}\, .
$$
\end{proposition}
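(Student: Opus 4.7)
The plan is to exploit the Serre relations for $\mathrm{sl}(3,\mathbb{R})$, namely $[X_\alpha,[X_\alpha,X_\beta]]=0$ and $[X_\beta,[X_\beta,X_\alpha]]=0$, which hold because $2\alpha+\beta$ and $\alpha+2\beta$ are not roots in the $A_2$ system. I will write $U=a_1\partial_x+a_2\partial_y+U^\perp$ and $V=b_1\partial_x+b_2\partial_y+V^\perp$ with $U^\perp,V^\perp$ constant vector fields in the span of $\partial_{x_3},\ldots,\partial_{x_N}$. The two Serre identities will then translate into algebraic constraints on the $a_i,b_i$ that pin down the two listed normal forms.

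Since $U$ and $V$ are constant, $[U,V]=0$, and the Leibniz rule for Lie brackets gives $[X_\alpha,X_\beta]=\exp(x+y)(a_2V-b_1U)$. This bracket must be a nonzero scalar multiple of the root vector $X_{\alpha+\beta}$, so $W:=a_2V-b_1U\neq 0$. A second application of the same rule, together with $W(x)=b_1(a_2-a_1)$ and $W(y)=a_2(b_2-b_1)$, yields
$$[X_\alpha,[X_\alpha,X_\beta]]\,=\,\exp(2x+y)\,a_2\bigl((a_1+a_2)V-2b_1U\bigr)$$
and symmetrically
$$[X_\beta,[X_\beta,X_\alpha]]\,=\,-\exp(x+2y)\,b_1\bigl(2a_2V-(b_1+b_2)U\bigr);$$
both are forced to vanish.

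The case analysis now runs cleanly. If $V$ and $U$ were proportional, the two identities together with $W\neq 0$ would be inconsistent, so $U,V$ are linearly independent. Vanishing of the first iterated bracket then forces $a_2=0$, whence vanishing of the second gives $b_1+b_2=0$ with $b_1\neq 0$; the opposite order yields $b_1=0$ together with $a_1+a_2=0$ and $a_2\neq 0$. These two alternatives correspond exactly to Form A and Form B of the proposition. Rescaling $X_\alpha$ and $X_\beta$ by independent nonzero constants then brings the surviving coefficient ($a_1$ in Form A, $b_2$ in Form B) to $1$, producing the stated vector fields with $U_1=U^\perp$ and $V_1=V^\perp$.

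The main obstacle I anticipate is this final normalization: the Serre computation on its own does not exclude, say, $a_1=0$ in Form A, in which case $U$ would be purely transverse and the claimed form would fail. To rule this out I would invoke Theorem~\ref{thm1}, which constrains $H_\alpha=[X_\alpha,X_{-\alpha}]$ to lie in the span of $\partial_x,\partial_y$ and to satisfy $H_\alpha(x)=2$, $H_\alpha(y)=-1$. Expanding $[X_\alpha,X_{-\alpha}]$ with $X_{-\alpha}=\exp(-x)\widetilde{U}$, where $\widetilde{U}$ has coefficients independent of $x,y$, and matching the $\partial_x$-component then forces $a_1\neq 0$, so that the rescaling is legitimate.
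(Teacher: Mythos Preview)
Your argument is correct and follows essentially the same route as the paper: both proofs use the two Serre-type relations $[X_\alpha,[X_\alpha,X_\beta]]=0$ and $[X_\beta,[X_\beta,X_\alpha]]=0$ to force the dichotomy $a_2=0$ or $b_1=0$ (with the complementary constraint $b_1+b_2=0$, resp.\ $a_1+a_2=0$), together with an $\mathrm{sl}(2)$ argument to ensure the remaining diagonal coefficient is nonzero so that one may rescale. The only differences are cosmetic: the paper performs the $\mathrm{sl}(2)$ normalization \emph{first} (observing that if the $\partial_x$-coefficient of $U$ vanished then $\langle X_\alpha,X_{-\alpha}\rangle$ would be solvable rather than a copy of $\mathrm{sl}(2,\mathbb{R})$), and then runs the Serre computation with $U=\partial_x+\lambda\partial_y$, $V=\mu\partial_x+\partial_y$; you do the Serre computation in full generality first and postpone the normalization, phrasing it via Theorem~\ref{thm1} and the Cartan eigenvalue $H_\alpha(x)=2$. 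The paper's solvability formulation of that last step is a touch more direct than your sketch, but the content is the same.
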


\begin{proof}
For vector fields $U$, $V$ and functions $\chi$, $\psi$,
$$
[\exp(\chi)U,\, \exp(\psi)V]\,=\, \exp(\chi+\psi)(U(\psi)\cdot V-V(\chi)\cdot U+
[U,\, V])\, .
$$
In particular, if $U$ and $V$ are constant vector fields on ${\mathbb R}^N$, then
$$
[\exp(\chi)U,\, \exp(\psi)V]\,=\, \exp(\chi+\psi)(U(\psi)\cdot V-V(\chi)\cdot U)\, .
$$

First note that if $X \,=\,\exp(x)A,\, Y\,=\, \exp(-x)B$ generate
${\rm sl}(2, {\mathbb R})$, with X, Y eigenvectors for their commutator with opposite
eigenvalues, and $$A \,=\, a\partial_x+C\, ,\ \ \
B \,=\, c\partial_x+D$$ are constant vector fields, with
$C$, $D$ supported outside $\partial_x$, then both $a$ and $c$
must be nonzero, otherwise $X,\, Y$ would generate a solvable algebra. By
``supported outside $\partial_x$'' we mean that the expression does not contain the
term $\partial_x$; we will employ this terminology.

Using the notation of the statement of the proposition, write
$$
U\,=\, a\partial_x+b\partial_y+U_1\, ,\ \
V\,=\, c\partial_x+d\partial_y+V_1\, ,
$$
where $U_1,\, V_1$ are constant vector fields supported outside
$\partial_x,\, \partial_y$; thus they
play no role in the commutation relations of the Lie algebra generated by
$X_\alpha$ and $X_\beta$. Therefore, to prove the proposition, we may ignore them. As
remarked above, we
may assume that $a$, $c$ are both non-zero and therefore we may assume
$$
X_\alpha\,=\, \exp(x)(\partial_x+\lambda\cdot\partial_y
+U_1)\, ,\ \ X_\beta\,=\, \exp(y)(\mu\cdot\partial_x+
\partial_y+V_1)\, .
$$
The commutation relations are not affected by ignoring the
constant vector fields supported outside $\partial_x,\,
\partial_y$ and we ignore them henceforth.
So we may assume that
$$
X_\alpha\,=\, \exp(x)(\partial_x+\lambda\cdot\partial_y)
\, ,\ \ X_\beta\,=\, \exp(y)(\mu\cdot\partial_x+
\partial_y)\, .
$$

The positive roots of a system of type $A_2$ are $\alpha$, $\beta$,
$\alpha+\beta$. Hence $[X_\alpha,\,X_\beta]$ commutes with both $X_\alpha$,
$X_\beta$. Now
\begin{equation}\label{x1}
[X_\alpha,\,X_\beta]\,=\, \exp(x+y)((\lambda-1)\mu\partial_x+(1-\mu)
\lambda\partial_y)\, .
\end{equation}
Its commutator with $X_\alpha$ is:
$$
\exp(2x+y)(\lambda-1)\mu(\partial_x+\lambda\partial_y)
-(\lambda+1)((\lambda-1)\mu\partial_x
+(1-\mu)\lambda\partial_y))\, .
$$

Hence $[X_{\alpha+\beta},\, X_\alpha]\,=\, 0$ implies that
$$
(\lambda-1)\mu-(\lambda-1)(\lambda+1)\mu\,=\,0\,=\, (\lambda-1)\lambda\mu-
(\lambda+1)\lambda(1-\mu)\, .
$$
If both $\lambda$ and $\mu$ are nonzero, then these equations give
$\lambda\,=\, 1\, =\,\mu$.
But then
$$
[X_\alpha,\, X_\beta]\, =\, 0
$$
from \eqref{x1}.

Thus one of $\lambda$ and $\mu$ is zero. If $\lambda\,=\, 0$,
we may take $X_\alpha\, =\, \exp(x) \partial_x$. Consequently,
$$
X_{\alpha+\beta}\,=\, \exp(x+y)(\lambda-1)\mu\partial_x\, ,
$$
and we may therefore take $X_{\alpha+\beta}\,=\,\exp(x+y)\partial_x$.

Using
$$
[X_{\alpha},\, X_{\alpha+\beta}]\,=\, \exp(x+2y)(\mu+1)\partial_x\,=\, 0
$$
gives $\mu\,=\, -1$. This yields the first representation
$$
X_{\alpha}\,=\, \exp(x)\partial_x\, ,\ \
X_{\beta}\,=\, \exp(y)(\partial_x-\partial_y)\, .
$$

If $\mu\,=\, 0$, then we may take $X_{\beta}\,=\, \exp(y)\partial_y$.
Therefore, from \eqref{x1},
$$
[X_{\alpha},\, X_{\beta}]\,=\,\exp(x+y)\lambda\partial_y\, .
$$

Therefore, $\lambda\,\not=\, 0$, and we can take
$$
X_{\alpha+\beta}\,=\, \exp(x+y)\partial_y\, .
$$
Its commutator with $X_{\beta}$ is zero, while its commutator with $X_{\alpha}$
is $\exp(2x+y)(1+\lambda)\partial_y$. As this commutator is
zero, we must have $\lambda\,=\, -1$. This gives the second representation
$$
X_{\alpha}\,=\, \exp(x)(\partial_x-\partial_y)\, ,\ \
X_{\beta}\,=\, \exp(y)\partial_y\, .
$$
This completes the proof.
\end{proof}

\begin{corollary}\label{cor2}
Any representation of ${\rm sl}(n+1, {\mathbb R})$ as vector fields on ${\mathbb R}^n$,
$n\,\geq\,2$, is equivalent by point transformations to the
following two representation given by the following root vectors for the simple roots
and their negatives and both the representations are equivalent by a point
transformation:
\begin{enumerate}
\item $X_1\,=\, \exp(x_1)\partial_{x_1}$,
$X_{\alpha_i}\,=\, \exp(x_i)(\partial_{x_i}-\partial_{x_{i-1}})$,\,
$2\, \leq\, i\, \leq\, n$,
$$
X_{-\alpha_i}\,=\, \exp(-x_i)(
\partial_{x_i}-\partial_{x_{i+1}}),\ 1\, \leq\, i\, \leq\, n-1,\
X_{-\alpha_n}\,=\, \exp(-x_n) \partial_{x_n}\, .
$$

\item $X_{\alpha_i}\,=\, \exp(x_i)(
\partial_{x_i}-\partial_{x_{i+1}})$,\, $1\, \leq\, i\, \leq\, n-1$, $X_{\alpha_n}\,=\, \exp(x_n)
\partial_{x_n}$,
$$
X_{-\alpha_1}\,=\, \exp(-x_1)
\partial_{x_1},\ X_{-\alpha_i}\,=\, \exp(-x_i)(
\partial_{x_i}-\partial_{x_{i-1}}),\ 2\, \leq\, i\, \leq\, n\, .
$$
\end{enumerate}
\end{corollary}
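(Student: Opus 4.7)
The plan is to combine Theorem~\ref{thm1} with Proposition~\ref{prop1} applied to each adjacent pair of simple roots in the $A_n$ Dynkin chain, using the commutation relations $[X_{\alpha_i},X_{\alpha_j}]=0$ for $|i-j|\ge 2$ to rigidify what remains.

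Since $\mathrm{sl}(n+1,\mathbb{R})$ has a split Cartan of dimension $n$ and we are acting on $\mathbb{R}^n$ (so $N=n$), Theorem~\ref{thm1} supplies coordinates $x_1,\ldots,x_n$ in which $X_{\alpha_i}=\exp(x_i)V_i$ and $X_{-\alpha_i}=\exp(-x_i)W_i$ with $V_i$ and $W_i$ having coefficients independent of $x_1,\ldots,x_n$. Since there are no further coordinates, $V_i$ and $W_i$ are \emph{constant} vector fields $V_i=\sum_k v_{ik}\partial_{x_k}$. For each $1\le i\le n-1$, the pair $\alpha_i,\alpha_{i+1}$ spans an $A_2$ subsystem and hence generates a copy of $\mathrm{sl}(3,\mathbb{R})$; applying Proposition~\ref{prop1} with $(x,y)=(x_i,x_{i+1})$ and absorbing the remaining coordinates into the ``supported outside'' parts, the pair $(V_i,V_{i+1})$, restricted to the $\partial_{x_i},\partial_{x_{i+1}}$ plane and up to a nonzero scalar, matches one of the two normal forms $(\partial_{x_i},\ \partial_{x_i}-\partial_{x_{i+1}})$ or $(\partial_{x_i}-\partial_{x_{i+1}},\ \partial_{x_{i+1}})$. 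For non-adjacent pairs $|i-j|\ge 2$, since $\alpha_i+\alpha_j$ is not a root, the constant-vector-field computation $[X_{\alpha_i},X_{\alpha_j}]=\exp(x_i+x_j)(v_{ij}V_j-v_{ji}V_i)=0$ together with the linear independence of $V_i,V_j$ forces $v_{ij}=v_{ji}=0$. Hence each $V_i$ is supported only on $\partial_{x_{i-1}},\partial_{x_i},\partial_{x_{i+1}}$.

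Propagating the pairwise normal forms along the chain then pins down a single global pattern: Proposition~\ref{prop1}'s first form at every link yields case~(1) after rescaling, and its second form at every link yields case~(2). The same type of analysis, or alternatively the constraints $[X_{\alpha_i},X_{-\alpha_i}]=H_{\alpha_i}$ together with the eigenvalue conditions $[H_{\alpha_j},X_{\pm\alpha_i}]=\pm a_{ji}X_{\pm\alpha_i}$, fixes the $W_i$'s to the listed forms. Finally, the two representations are exchanged by the point transformation $\tilde x_i=x_{n+1-i}$ composed with the Dynkin outer automorphism $\alpha_i\leftrightarrow\alpha_{n+1-i}$, establishing their equivalence.

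The main obstacle is the propagation step: one must verify that a ``mixed'' choice --- Proposition~\ref{prop1}'s first option at some links and second option at others --- either collapses a higher-root vector $X_{\alpha_p+\alpha_{p+1}+\cdots+\alpha_q}$ to zero, contradicting that $\alpha_p+\cdots+\alpha_q$ is a root of $A_n$, or can be eliminated by a further linear change of the coordinates $x_1,\ldots,x_n$ that brings the global pattern to one of the two listed forms. Once this is done, the remaining scalars permitted by the ``up to multiplicative constants'' clause in Proposition~\ref{prop1} are absorbed by rescaling each $\mathrm{sl}(2,\mathbb{R})$-triple $\{X_{\alpha_i},X_{-\alpha_i}\}$ inversely (which preserves $H_{\alpha_i}$), giving exactly the $\pm 1$ coefficients printed in the corollary.
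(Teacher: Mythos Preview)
Your outline is essentially the paper's own: invoke Theorem~\ref{thm1} to make the $V_i,W_i$ constant (since $N=n$), apply Proposition~\ref{prop1} to each adjacent pair in the $A_n$ chain, and use vanishing commutators for non-adjacent pairs. The paper's proof is itself only a two-line sketch, so at the level of strategy you match it exactly.

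Two remarks on the gap you flag. First, the paper explicitly lists, in addition to $[X_r,X_s]=0$ when $r+s$ is not a root, the relation $[X_{-r},X_s]=0$ for \emph{distinct simple} roots $r,s$; this holds also for adjacent pairs (since $-\alpha_i+\alpha_{i+1}$ is never a root in $A_n$) and is the extra ingredient that fixes the $W_i$'s and ties the negative-root data to the positive-root data at each link --- you should use it rather than appealing only to the Cartan eigenvalue conditions. Second, the paper phrases the argument \emph{inductively along the chain} ${\rm sl}(2)\subset{\rm sl}(3)\subset\cdots$ rather than applying Proposition~\ref{prop1} to all links simultaneously; this makes the ``mixed choice'' issue disappear, because at the inductive step the form of $X_{\alpha_1},\ldots,X_{\alpha_{m}}$ is already fixed (up to the global dichotomy), and Proposition~\ref{prop1} applied to $(\alpha_m,\alpha_{m+1})$ then has only one option compatible with the already-determined $\partial_{x_{m-1}},\partial_{x_m}$ components of $X_{\alpha_m}$. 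That is the cleanest way to close the gap you identified, and it is what the paper's wording ``inductively for the chain of subalgebras'' is pointing at.
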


\begin{proof}
Use Proposition \ref{prop1} and Theorem \ref{thm1} inductively for the chain
of subalgebras
$$
{\rm sl}(2, {\mathbb R})\, \subset\, {\rm sl}(3, {\mathbb R})\, \subset\,
\cdots \, \subset\, {\rm sl}(n, {\mathbb R})\, \subset\,{\rm sl}(n+1, {\mathbb R})
$$
and the commutator relations $[X_{-r},\, X_s]\,=\,0$ for simple and unequal roots $r$, $s$
and $[X_r,\, X_s]\,=\, 0$ if $r+s$ is not a root and $r\,\not=\, -s$.
\end{proof}

\begin{corollary}\label{cor1}\mbox{}
\begin{enumerate}
\item Real analytic semisimple algebras of vector fields in $\mathbb R$ can be only
split and of type $A_1$.

\item Real analytic semisimple Lie algebras of vector fields in ${\mathbb R}^2$
can only be real forms
of algebras of types $A_1$, $A_1\times A_1$ or $A_2$.

\item Real analytic semisimple Lie
algebras of vector fields in ${\mathbb R}^3$ -- apart from the
types listed in (2) -- can only be real forms of algebras of
types $B_2$, $A_2\times A_1$ or $A_3$.
\end{enumerate}
\end{corollary}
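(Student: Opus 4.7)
My approach combines the rank-dimension bound implicit in Theorem \ref{thm1} with a transitivity/codimension analysis, supplemented by the explicit commutator formulas from Proposition \ref{prop1}. The first ingredient is the general bound that the complex rank of any real semisimple Lie algebra $L$ of real analytic vector fields on $\mathbb{R}^N$ is at most $N$. Since $L$ need not possess a split Cartan over $\mathbb{R}$, I would pass to the complexification $L_\mathbb{C}$, which acts as holomorphic vector fields on a neighborhood in $\mathbb{C}^N$. Any Cartan $C_\mathbb{C}$ of $L_\mathbb{C}$ is split over $\mathbb{C}$, and the inductive argument in the proof of Theorem \ref{thm1} carries over verbatim in the complex setting, yielding $\dim_\mathbb{C} C_\mathbb{C} \leq N$. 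This restricts the admissible complex types to a short finite list in each case.

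For (1), the bound forces rank equal to $1$, so the complex type is $A_1$. Writing a Cartan generator locally as $\partial_x$ and using that vector fields on $\mathbb{R}$ have the form $f(x)\partial_x$, the eigenvalue equation $f' = \lambda f$ has real solutions for real $f$ only when $\lambda$ is real. Hence the Cartan is automatically split and $L \cong \mathfrak{sl}(2,\mathbb{R})$.

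For (2) and (3) I would proceed case-by-case through the complex types of rank $\leq N$. Each listed type is realized by a standard construction: the projective action of $\mathfrak{sl}(2,\mathbb{R})$ on $\mathbb{R}$, products thereof, Corollary \ref{cor2} for $\mathfrak{sl}(n+1,\mathbb{R})$ on $\mathbb{R}^n$, and the natural projective action of $\mathfrak{sp}(4,\mathbb{R})$ on $\mathbb{P}^3$ yielding $B_2$ on $\mathbb{R}^3$. To rule out the remaining types, I use the observation that when the rank of $L$ equals $N$, Theorem \ref{thm1} forces local transitivity, so $L$ must contain a subalgebra of codimension $N$ containing no nonzero ideal of $L$; more generally, rank $r$ forces such a subalgebra of codimension $r$ (the stabilizer of a point in a generic leaf). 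A root-system check shows that the excluded types ($G_2$ on $\mathbb{R}^2$, and $B_3, C_3, G_2\times A_1$ and so forth on $\mathbb{R}^3$) admit no proper subalgebra of the required codimension. Any borderline cases where the required subalgebra exists abstractly but cannot realize a faithful action are handled by direct commutator computation: writing $X_{\alpha_i} = \exp(x_i) V_i$ in the canonical form of Theorem \ref{thm1} and applying
\[
[\exp(\chi)U,\ \exp(\psi)V] \,=\, \exp(\chi+\psi)\bigl(U(\psi)V - V(\chi)U + [U,V]\bigr)
\]
to verify or contradict the relevant Serre relations on the $V_i$.

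The main obstacle will be the systematic enumeration for $N=3$: keeping track of which rank-$3$ complex types admit codimension-$3$ subalgebras that arise as genuine stabilizers of faithful actions, and then carrying out the commutator bookkeeping in the borderline cases where the rank is strictly less than $N$ and the pure codimension argument does not conclude. This is laborious but requires no new ideas beyond Theorem \ref{thm1} and Proposition \ref{prop1}.
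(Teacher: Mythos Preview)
The paper does not actually supply a proof of this corollary: immediately after the statement it reads ``The proof of this corollary together with all the real forms will appear elsewhere.'' So there is no in-paper argument to compare your proposal against. Your overall strategy---complexify, invoke the argument of Theorem~\ref{thm1} over $\mathbb{C}$ to bound the rank by $N$, and then eliminate the finitely many remaining root-system types via the stabilizer/codimension criterion---is the natural one and is presumably close to what the authors have in mind.

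That said, your argument for part~(1) has a gap. You straighten a Cartan generator to $\partial_x$ and observe that a \emph{real} eigenvector $f\partial_x$ of $\mathrm{ad}(\partial_x)$ forces a real eigenvalue. But this does not by itself rule out the compact form: a non-split Cartan simply has no real root vectors, so the test is vacuous. Concretely, $\partial_x,\ \cos x\,\partial_x,\ \sin x\,\partial_x$ span a copy of $\mathfrak{sl}(2,\mathbb{R})$ on $\mathbb{R}$ in which $\partial_x$ generates a \emph{non}-split Cartan with purely imaginary $\mathrm{ad}$-eigenvalues; the split Cartan here is generated by $\cos x\,\partial_x$. The clean fix is the stabilizer argument you already invoke later: a faithful action on $\mathbb{R}$ forces a subalgebra of codimension~$1$, hence of dimension~$2$, and $\mathfrak{su}(2)$ has none.

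One further point to watch when you carry out the enumeration for part~(3): the type $A_1\times A_1\times A_1$ passes both the rank bound and the codimension test (three independent copies of the projective action on a line realise it on $\mathbb{R}^3$), so your method will not exclude it. Either the statement tacitly subsumes this case or it is an omission; you should flag this rather than attempt to prove its exclusion. Similarly, remember that $G_2$ must be excluded on $\mathbb{R}^3$ as well as on $\mathbb{R}^2$; the same minimal-codimension check (no proper subalgebra of codimension $\leq 3$) handles it.
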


The proof of this corollary together with all the real forms will appear elsewhere.

\section{Applications to systems of ordinary differential equations}

As already mentioned in the introduction, to find linearizable equations, one could
look at systems of ordinary differential equations with known semisimple Lie algebras of symmetries and find
their realizations as vector fields and determine the joint invariants in a suitable
prolongation.

Here, we restrict ourselves to the algebras ${\rm sl}(3, {\mathbb R})$ and ${\rm
sl}(4, {\mathbb R})$ that are known to be the symmetry algebras of $y'' \,=\, 0$ and
$y''\, =\, 0\,=\, z''$, respectively. Therefore, we need to find all representations
of ${\rm sl}(3, {\mathbb R})$ and ${\rm sl}(4, {\mathbb R})$ as vector fields on
${\mathbb R}^3$ and ${\mathbb R}^4$. Declaring one of the variables to be the
independent variable and considering
the remaining as dependent, we need to find the invariants
in the second prolongation. If, in the canonical coordinates for an algorithmically
computable abelian subalgebra there are linear systems present, then those
coordinates would give the linearizing coordinates.

We will implement this program in this section.

\begin{proposition}\label{prop2}\mbox{}
\begin{enumerate}
\item If a second order ordinary differential equation of the form $y'' \,=\, f(x, y, y')$ has ${\rm sl}(3,
{\mathbb R})$ as its symmetry algebra, then the canonical coordinates for the maximal
abelian subalgebra of ad-nilpotent elements of maximal rank give the linearizing
coordinates for the given ordinary differential equation and in these coordinates the equation is $y'' \,=\, 0$.

\item If a system of second order ordinary differential equations of the form
$y''(x) \,=\, f(x, y, z, y', z')$, $z''(x) \,=\, g(x, y, z, y', z')$
has ${\rm sl}(4, {\mathbb R})$ as
its symmetry algebra, then the canonical coordinates for the maximal abelian
subalgebra of ad-nilpotent elements of maximal rank give the linearizing coordinates
for the given system of ordinary differential equations. In these coordinates the system becomes
$y''(x)\,=\, 0\,=\, z''(x)$.
\end{enumerate}
\end{proposition}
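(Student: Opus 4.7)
The plan is to combine Corollary \ref{cor2} with a classical invariant-theoretic computation on the second-order jet space. By hypothesis, for part (1) the symmetry algebra $\mathrm{sl}(3,\mathbb{R})$ acts as vector fields on $\mathbb{R}^2$ with coordinates $(x,y)$, and for part (2) the algebra $\mathrm{sl}(4,\mathbb{R})$ acts on $\mathbb{R}^3$ with coordinates $(x,y,z)$. By Corollary \ref{cor2} each such action is equivalent under a point transformation to one of the two normal forms listed there (and these two are themselves equivalent). Consequently, after a point transformation we may work with the standard projective realization of $\mathrm{sl}(n+1,\mathbb{R})$ on the affine chart of $\mathbb{R}P^n$ (with $n=2$ or $n=3$).

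Next, I identify the maximal abelian subalgebra $A$ of ad-nilpotent elements of maximal rank. In the matrix model of $\mathrm{sl}(n+1,\mathbb{R})$ (rows and columns indexed by $0,1,\ldots,n$), the span of the matrix units $E_{i0}$ with $1\le i\le n$ is an $n$-dimensional abelian subalgebra of nilpotent matrices, and a direct calculation using the standard commutator formula $[E_{ij},E_{k\ell}]=\delta_{jk}E_{i\ell}-\delta_{i\ell}E_{kj}$ shows it is even maximal abelian in $\mathrm{sl}(n+1,\mathbb{R})$. In the projective realization these matrices correspond precisely to the translations $\partial_{x_1},\ldots,\partial_{x_n}$, which have full rank $n$ on $\mathbb{R}^n$. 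By contrast, the other conjugacy classes of maximal abelian nilpotent subalgebras --- for instance the ``first-row'' class in $\mathrm{sl}(n+1,\mathbb{R})$, or the $2\times 2$ upper-right Jordan block class inside $\mathrm{sl}(4,\mathbb{R})$ --- realize vector fields all proportional to a common radial field and so have strictly smaller rank in the projective action. Hence $A$ is, up to an inner automorphism implemented by a point transformation, the translation subalgebra, and its canonical coordinates are the projective coordinates $(x_1,\ldots,x_n)$ themselves, well-defined up to an affine change.

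Designating $x_1$ as the independent variable and the remaining $x_j$ as dependent variables, the linearization claim reduces to showing that the only $\mathrm{sl}(n+1,\mathbb{R})$-invariant second-order equation (part 1) or system (part 2) in these coordinates is $y''=0$ (respectively $y''=0=z''$). This is the classical Lie computation: one prolongs the standard generators --- translations, scalings, and the projective vector fields of the form $x_j^2\partial_{x_j}+x_jx_k\partial_{x_k}$ --- to the second-order jet space, and observes that the only joint invariants involving the second derivatives are the functions $y_j''$ themselves. Invariance of the equation manifold then forces each $y_j''$ to vanish, yielding the stated linear form.

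The main obstacle is the uniqueness claim in the second step: one must rule out additional abelian ad-nilpotent subalgebras of maximal rank that are not inner-conjugate to the translation subalgebra, so that its canonical coordinates are intrinsic up to an affine change (under which $y''=0$ is preserved). Given this, the second-prolongation calculation of invariants is routine in the explicit generators supplied by Corollary \ref{cor2}, and completes the proof of both parts.
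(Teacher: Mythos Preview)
Your overall strategy coincides with the paper's: invoke Corollary~\ref{cor2} to place the realization in projective normal form, identify the maximal abelian ad-nilpotent subalgebra of full rank with the translation subalgebra, pass to its canonical (affine) coordinates, and then compute second prolongations of the generators to pin down the invariant equation(s). The paper carries this out by explicit calculation of the prolonged root vectors rather than by citing a classical invariant-theoretic statement, but the architecture is the same.

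There is one point where your sketch is thinner than it should be. For part~(2) you call the remaining step ``the classical Lie computation'' and ``routine.'' The paper stresses that, unlike a single scalar equation, the \emph{form} $y''=f$, $z''=g$ of a system is not stable under point transformations; hence one cannot simply assume the invariant submanifold is a graph over $(x,y,z,y',z')$. The paper therefore treats the system as a codimension-two submanifold $M$ of the second jet space, uses invariance under $\partial_x,\partial_y,\partial_z$ and under $X^{(2)}_{-\alpha}=x\partial_y+\partial_{y'}$ to restrict which pairs of jet variables can be solved for, and then runs a short case analysis (three cases) using $X^{(2)}_{-\beta}$ and $X^{(2)}_\alpha$ to force $y''=0=z''$. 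Your proposal should make this geometric step explicit rather than absorb it into a blanket appeal to joint invariants; the claim that ``the only joint invariants involving the second derivatives are the functions $y_j''$ themselves'' is not what is actually being used, and would itself require the same case analysis to justify.
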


Before giving a proof of Proposition \ref{prop2}, we recall, for the benefit of
non-specialists, how on can compute prolongations of vector fields ab initio --
following Lie \cite[p. 261--274]{Li}. For generalities, the reader
is referred to \cite{Ib} and \cite{Ol}.

First of all, a vector field – in local coordinates $(x_1,\, \cdots,\, x_n)$ is a sum
of vector fields $f_i{\partial}_{x_i}$. Therefore one needs to only find
prolongations of such fields and add them to get the prolongation to any desired
order. Secondly, the prolongations are obtained by repeated use of the chain rule.

Bearing this in mind, the proof of Proposition \ref{prop2} is a consequence of the canonical
representations of ${\rm sl}(3,
{\mathbb R})$ and ${\rm sl}(4,{\mathbb R})$. Although the first part is already in
\cite{Li}, we have given details in this case also as the same ideas regarding
linearizing coordinates and where they live work also for systems of ordinary differential equations.

\section{Proof of Proposition \ref{prop2}}

\subsection{Proof of part (1)}

Let $L\,=\, {\rm sl}(3, {\mathbb R})$. It has only one representation -- up to point
transformations -- given by
$$
X_\alpha\,=\, \exp(x)({\partial}_{x} -{\partial}_y),\
X_\beta\,=\, \exp(y){\partial}_{y},\
X_{-\alpha}\,=\, \exp(-x){\partial}_{x},\
X_{-\beta}\,=\, \exp(-y)({\partial}_{x} -{\partial}_y)\, .
$$
A maximal ad-nilpotent subalgebra corresponds to all the root spaces for the positive
roots. Thus, besides $X_\alpha$ and $X_\beta$, it has, as a basis,
$X_{\alpha+\beta}\,=\, \exp(x+y){\partial}_{y}$. The
algebra generated by $X_\alpha$ and $X_{\alpha+\beta}$ is abelian and its rank
is two. It is a maximal
abelian algebra made up of root vectors. We write the full algebra in terms of the
canonical coordinates for this abelian algebra.

The canonical coordinates for this abelian algebra are given by
$\exp(x)({\partial}_{x} -{\partial}_y)\,=\, {\partial}_u$,
$\exp(x+y){\partial}_y\,=\, {\partial}_v$. The
differentials of $u$, $v$ are dual to the fields
${\partial}_u$, ${\partial}_v$. Thus, in these coordinates, the root
spaces work out to be
$$
X_\alpha\,=\, \partial_u,\ X_\beta\,=\, u\partial_v,\
X_{-\alpha}\,=\,u(u\partial_u+v\partial_v),\ X_{-\beta}\,=\,v\partial_u\, .
$$
For convenience, we write $u$, $v$ as $x$, $y$ respectively.

As vector fields on a plane,
$$
X_\alpha\,=\, (1,\, 0),\ X_\beta\,=\, (0,\, x),\ X_{-\alpha}\,=\,
(x^2,\, xy),\ X_{-\beta}\,=\, (y,\, 0)\, .
$$
The corresponding flows -- up to order $\epsilon$ -- are given by:
$$
(\widetilde{x},\, \widetilde{y})\,=\, (x+\epsilon,\, y),\
(\widetilde{x},\, \widetilde{y})\,=\, (x,\, y+\epsilon x),\
(\widetilde{x},\, \widetilde{y})\,=\, (x+\epsilon x^2,\, y+\epsilon xy),\
(\widetilde{x},\, \widetilde{y})\,=\, (x+\epsilon y,\, y)\, .
$$
To find the prolonged action, we need to declare one of the variables as independent
and the remaining variables as dependent and then we have to look at how the first
and second derivatives are transformed by these flows. If
$$
(\widetilde{x},\, \widetilde{y})\,=\, (x+\epsilon,\, y)\, ,
$$
then
$$
\frac{d\widetilde y}{d\widetilde x}\,=\, \frac{dy}{dx}\ \ \text{ and } \ \
\frac{d^2\widetilde y}{d{\widetilde x}^2}\,=\, \frac{d^2y}{dx^2}\, .
$$
Consequently, the second prolongation of $X_\alpha$ has no components in
the variables $y',\, y''$ and therefore it is given by $X^{(2)}_\alpha\,=\,
\partial_x$.

If $(\widetilde{x},\, \widetilde{y})\,=\, (x,\, y+\epsilon x)$,
then
$$
\frac{d\widetilde y}{d\widetilde x}\,=\, \frac{dy+\epsilon dx}{dx}
\,=\, \frac{dy}{dx}+\epsilon\, ,\ \
\frac{d^2\widetilde y}{d{\widetilde x}^2}\,=\,
\frac{d}{d\widetilde x}(\frac{d\widetilde y}{d\widetilde x})\,=\,
\frac{\frac{d}{dx}(\frac{dy}{dx}+\epsilon)}{\frac{d\widetilde x}{dx}}
\,=\, \frac{d^2y}{dx^2}\, ,
$$
therefore $X^{(2)}_\beta\,=\, x\partial_y+\partial_{y'}$.

For computing the second prolongation of $X_{-\alpha}\,=\,
x(x\partial_x+y\partial_{y})$, it is convenient
to compute the second prolongations of $x^2\partial_x$ and
$xy\partial_y$ and add them.
Thus if $(\widetilde{x},\, \widetilde{y})\,=\, (x+\epsilon x^2,\, y)$, then
$$
\frac{d\widetilde y}{d\widetilde x}\,=\, \frac{dy}{dx}-2\epsilon x\frac{dy}{dx}+
\cdots\, ,\ \frac{d^2\widetilde y}{d{\widetilde x}^2}\,=\, \frac{d^2y}{dx^2}
-2\epsilon(\frac{dy}{dx}+x\frac{d^2y}{dx^2})-2\epsilon x\frac{d^2y}{dx^2}+\cdots\, .
$$
Therefore, the second prolongation of $x^2\partial_x$ is
\begin{equation}\label{l1}
x^2\partial_x-2xy'\partial_{y'}-2(y'+2xy'')\partial_{y''}\, .
\end{equation}

If $(\widetilde{x},\, \widetilde{y})\,=\, (x,\, y+\epsilon xy)$, then
$$
\frac{d\widetilde y}{d\widetilde x}\,=\, \frac{dy}{dx}+\epsilon (y+x\frac{dy}{dx})\,
 , \ \frac{d^2\widetilde y}{d{\widetilde x}^2}\,=\, \frac{d^2y}{dx^2}
+\epsilon(2\frac{dy}{dx}+x\frac{d^2y}{dx^2})\, .
$$
Therefore the second prolongation of $xy \partial_y$ is
\begin{equation}\label{l2}
xy\partial_y +(y+xy')\partial_{y'}+(2y'+xy'')\partial_{y''}\, .
\end{equation}
Adding \eqref{l1} and \eqref{l2} gives the second prolongation
of $X_{-\alpha}$ namely,
$$
X^{(2)}_{-\alpha}\,=\, x^2\partial_x+xy\partial_y + (y-xy')\partial_{y'}-3xy''
\partial_{y''}\, .
$$

Now suppose we have a second order equation of the form
$y''\,=\, f(x,y,y')$ -- invariant under $X_\alpha$ and $X_\beta$; this means that
it is at least invariant under $X^{(2)}_{\alpha}\,=\,\partial_x$
and $X^{(2)}_{\beta}\,=\,x\partial_y+\partial_{y'}$. Then
it is invariant under $[X^{(2)}_{\alpha},\, X^{(2)}_{\beta}]\,=\,
\partial_y$. Thus it is of the form $y''\,=\,f(y')$. Hence invariance under
$X^{(2)}_{\beta}$ shows that the equation is $y''-k\,=\, 0$, where $k$ is
a constant. Applying $X^{(2)}_{-\alpha}$ to the
equation we must have $-3xy'' \,=\, 0$ on the hypersurface $y'' - k \,= \,0$.
Therefore $k\,=\, 0$ and the equation is invariant under the full algebra.

Hence the only second order equation with symmetry algebra ${\rm
sl}(3, {\mathbb R})$ is -- in suitable coordinates $y''\,=\,0$.
These coordinates are the canonical coordinates of the maximal
abelian subalgebra of maximal rank in the nil-radical of a Borel
subalgebra of the symmetry algebra ${\rm sl}(3, {\mathbb R})$. The
nil-radical of the Borel subalgebra can be algorithmically
determined from a single ad-nilpotent element as shown in
\cite{AABGM}. This completes the proof of the first part.

\subsection{Alternative proof of part (1)}

We note that while an equation $y''\,=\, f(x,y,y')$ is transformed to a similar
equation under point transformations, this is no longer true for a system of two
such equations. We give below an alternative argument for part (1) whose ideas work
also for systems.

A maximal ad-nilpotent subalgebra corresponds to all the root spaces for the
positive roots. Thus, besides $X_\alpha$ and $X_\beta$, it has, as a basis,
$X_{\alpha+\beta}\,=\, \exp(x+y)\partial_y$. The algebra generated by
$X_\alpha$ and $X_{\alpha+\beta}$ is abelian and of rank two. It is a
maximal abelian algebra $A$ made up of root vectors. We write the full algebra in
terms of the canonical coordinates $(x,\, y)$ for this abelian algebra.

In the representation of ${\rm sl}(3, {\mathbb R})$ in $V({\mathbb R}^2)$ extended to
the 2nd prolongation, we have independent variables $x,\, y,\, y',\, y''$. Then, by the
Implicit function theorem (see, e.g. \cite{dO})
the invariant nonsingular hypersurfaces $H$ have defining equations
in which -- because of invariance under $\partial_x,\, \partial_y$ -- the variables
$x,\, y$ do not appear. Thus the equations for $H$ can only be
$$
y''-f(y')\,=\, 0 \ \ \ \text{ or }\ \ \ y'-g(y'')\,=\, 0\, .
$$
The second equation cannot occur because of the vector field $X_\beta\,=\,x\partial_y$
whose second prolongation is $x\partial_y+\partial_{y'}$.
Consequently, we have only one possibility $y''-f(y')\,=\, 0$.

Applying $x\partial_y+\partial_{y'}$ we have $f(y')\,=\, k$.
Hence the equation is $y''-k\,=\, 0$.

Applying $X^{(2)}_{-\alpha}\,=\, x^2\partial_x+xy\partial_{y}+(y-xy')\partial_{y'}-
3xy''\partial_{y''}$ to the
equation gives $xy''\,=\, 0$ and $y''\,=\, k$. Hence if $k\, \not=\, 0$,
then the function $x$ would
vanish identically on $H$ and this contradicts that $H$ is a hypersurface. Thus the
only possibility is that H is defined by $y''\,=\,0$.

Finally, as $X^{(2)}_{-\beta}\,=\, y\partial_{x}-(y')^2\partial_{y'}-3y'y''
\partial_{y''}$, it follows that $H$ is also invariant
under $X^{(2)}_{-\beta}$. Therefore as $H$ is invariant under the generators
$X^{(2)}_{\alpha}$, $X^{(2)}_{-\alpha}$, $X^{(2)}_{\beta}$ and $X^{(2)}_{-\beta}$,
it is invariant under ${\rm sl}(3, {\mathbb R})$.

\subsection{Proof of part (2)}

As the form of invariant systems is not in general invariant under point
transformations, we need to approach this problem in a more geometric way, combined
with detailed information on the root vectors and their prolongations.

First of all we note that in the prolonged space with coordinates $x,\, y,\, z,\,
y',\, z',\, y'',\, z''$ if we have a system of equations
$$
F(x, y, z, y', z', y'', z'')\,=\, 0\,=\, G(x, y, z, y', z', y'', z'')
$$
and the rank is two at a point $p$ of the subset $F\,=\, 0\,=\, G$,
then locally, the set $M$ defined by these equations is a five dimensional submanifold of
the extended space with coordinates $x,\, y,\, z,\,
y',\, z',\, y'',\, z''$.

By the implicit function theorem, the condition of rank being two near a given point
$p$ of $M$ implies that we can solve for two of the variables explicitly in terms of the
remaining variables.

If $M$ is invariant under translations $\partial_x,\, \partial_y,\, \partial_z$, then
none of these variables can be $x$, $y$ or $z$.
Thus we have to choose two of the variables from $y',\, y'',\, z',\, z''$.
Moreover, the resulting equations cannot involve $x$, $y$ or $z$ as free variables,
again because of invariance under $\partial_x,\, \partial_y,\, \partial_z$.

If $M$ is also invariant under $X\,=\, x\partial_y$ then as the second prolongation of
$X$ is $x\partial_y+\partial_{y'}$, none of these equations can have $y'$ as
an independent variable or a dependent variable.

So we have the following possibilities:

\begin{enumerate}
\item[(i)] Dependent variables are $y'',\, z'$, independent variable is $z''$
-- and equations are $y''\,=\, f(z'')$, $z'\,=\, g(z'')$.

\item[(ii)] Dependent variables are $z'$, $z''$, independent variable is
$y''$ -- and equations are $z'\,=\, f(y'')$, $z''\,=\, g(y'')$.

\item[(iii)] Dependent variables are $y''$, $z''$, independent variable is
$z'$ -- and equations are $y''\,=\, f(z')$, $z''\,=\, g(z')$.
\end{enumerate}
To determine these equations in the case at hand, we need detailed information
about the root vectors and their second prolongations.

We choose coordinates adapted to the structure of the given Lie algebra. In the case
of ${\rm sl}(4, {\mathbb R})$ the coordinates representing one independent and two
dependent variables will be the canonical coordinates of a canonically defined
subalgebra, uniquely determined up to conjugation.

By Corollary \ref{cor2}, the Lie algebra $L\,=\, {\rm sl}(4, {\mathbb R})$ has only one representation
as vector fields in three dimensions; up to point transformations it is given by given
by the root vectors
$$
X_\alpha\,=\, \exp(x)\partial_x,\, X_\beta\,=\, \exp(y)(\partial_y-\partial_x),\,
X_\gamma\,=\, \exp(z)(\partial_z-\partial_y)
$$
$$
X_{-\alpha}\,=\, \exp(-x)(\partial_x-\partial_y),\,
X_{-\beta}\,=\, \exp(-y)(\partial_y-\partial_z),\,X_{-\gamma}\,=\, \exp(-z)\partial_z\, .
$$
A maximal abelian subalgebra of ad-nilpotent elements of geometric rank 3 is
$$
X_{\alpha+\beta+\gamma}\,=\, \exp(x+y+z)\partial_x,\, X_{\beta+\gamma}\,=\,
\exp(y+z)(\partial_y-\partial_x),\,
X_\gamma\,=\, \exp(z)(\partial_z-\partial_y)\, .
$$

The canonical coordinates for this algebra are given by solving
$$
\exp(x+y+z)\partial_x\,=\, \partial_u,\, \exp(y+z)(\partial_y-\partial_x)\,
=\, \partial_v,\, \exp(z)(\partial_z-\partial_y)\,=\, \partial_w\, .
$$
Solving this system, we have
$$
u\,=\, -\exp(-x-y-z),\, v\,=\, -\exp(-y-z),\, w\,=\, -\exp(-z)\, .
$$
Thus $\partial_x\,=\, -u\partial_u$, $\partial_y-\partial_x\,=\, -v\partial_v$
and $\partial_z-\partial_y\,=\, -w\partial_w$. Also
$$
\exp(z)\,=\, -\frac{1}{w},\, \exp(y)\,=\, \frac{w}{v},\,
\exp(x)\,=\, \frac{v}{u}\, .
$$
Therefore, in these coordinates, ignoring signs,
$$
X_\alpha\, =\, v\partial_u,\, X_\beta\, =\, w\partial_v,\,X_\gamma\, =\, \partial_w,\,
X_{-\alpha}\, =\, u\partial_v
$$
$$
X_{-\beta}\, =\, v\partial_w,\, X_{-\gamma}\, =\, w(u\partial_u+v\partial_v+
w\partial_w)\, .
$$
We need to compute the second prolongation of these fields. To do this, we have to
declare one of these variables as the independent and the remaining variables as the
dependent variables.

We take $u$ as the independent variable and $v$, $w$ as the dependent variables.

Following conventions, we re-label $u\,=\, x$, $v\,=\, y$ and $w\,=\, z$.
Thus, our fields are
$$
X_\alpha\, =\, y\partial_x,\, X_\beta\, =\, z\partial_y,\,X_\gamma\, =\, \partial_z,\,
X_{-\alpha}\, =\, x\partial_y
$$
$$
X_{-\beta}\, =\, y\partial_z,\, X_{-\gamma}\, =\, z(x\partial_x+y\partial_y+
z\partial_z)\, .
$$
We find the second prolongations by using the chain rule as in \cite[p.~261--274]{Li}.
We obtain
\begin{itemize}
\item $X^{(2)}_\alpha\,=\, y\partial_x-(y')^2\partial_{y'}-y'z'\partial_{z'}-
3y'y''\partial_{y''}-(y''z'+2y'z'')\partial_{z''}$,

\item $X^{(2)}_\beta\,=\, z\partial_y+z'\partial_{y'}+z''\partial_{y''}$,

\item $X^{(2)}_{-\alpha}\,=\, x\partial_y+\partial_{y'}$,

\item $X^{(2)}_{-\beta}\,=\, y\partial_z+y'\partial_{z'}+y''\partial_{z''}$,

\item $X^{(2)}_{\gamma}\,=\,\partial_z$.
\end{itemize}
To find $X^{(2)}_{-\gamma}$, it is convenient to find the second prolongations of
$zx\partial_x$, $zy\partial_{y}$, $z^2\partial_{z}$ and add them. We have:
\begin{itemize}
\item $(zx\partial_x)^{(2)}\,=\, zx\partial_x-y'(xz'+z)\partial_{y'}-
z'(xz'+z)\partial_{z'}-(2y''(xz'+z)+y'(xz''+2z'))\partial_{y''}-(3xz'z''+
2(z')^2+2zz'')\partial_{z''}$,

\item $(zy\partial_y)^{(2)}\,=\, yz\partial_y+(y'z+yz')\partial_{y'}+(y''z+
2y'z'+yz'')\partial_{y''}$,

\item $(z^2\partial_z)^{(2)}\,=\, z^2\partial_z+2zz'\partial_{z'}+2((z')^2+zz'')
\partial_{z''}$.
\end{itemize}

We can now determine all the invariant systems. We have the following possibilities:

{\bf Case (1):}\, Dependent variables are $y'',\, z'$, independent variable is
$z''$, and equations are $y''\,=\, f(z'')$, $z'\,=\, g(z'')$.

Applying $X^{(2)}_{-\beta}\,=\, y\partial_z+y'\partial_{z'}+y''\partial_{z''}$, we must have
$y'\,=\,y''g'(z'')$. Thus on $M$ we have one more functionally independent equation
$y'\,=\, f(z'')g'(z'')$ and dimension of $M$ would decrease. Hence this case does not
arise.

{\bf Case (2):}\, The dependent variables are $z',\, z''$, the
independent variable is $y''$, and the equations are $z'\,=\,
f(y'')$, $z''\,=\, g(y'')$. Applying
$X^{(2)}_{-\beta}\,=\, y\partial_z+y'\partial_{z'}+y''\partial_{z''}$
to $z'\,=\, f(y'')$ gives $y'\,=\, 0$
along the solution space $M$ and its dimension would decrease. Thus, this case
also does not arise.

{\bf Case (3):}\, The dependent variables are $y''$, $z''$, the
independent variable is $z'$, and the equations are $y''\,=\,
f(z')$, $z''\,=\, g(z')$. Applying $X^{(2)}_{-\beta}\,=\,
y\partial_z+y'\partial_{z'}+y''\partial_{z''}$ we have
$y'f'(z')\,=\,0$, $y''\,=\, y'g'(z')$.

If $f'(z')$ is not identically zero along the solution space then $y'\,=\,0$ and the
dimension of $M$ would decrease. Hence $f'(z')\,=\, 0$ along $M$.

Thus $y''\,=\, k$ along $M$ and $k\,=\, y' g'(z')$. If $k\,=\, 0$,
then $y'$ cannot vanish along $M$ and therefore $y' g'(z')\,=\,0$
implies $g(z')\,=\, l$ and $M$ is defined by $y''\,=\, 0$,
$z''\,=\, l$. If $k\, \not=\, 0$, then we would have an extra
equation
$$y'\,=\,\frac{k}{g'(z')}$$ and the dimension of M would go down.

Consequently, the only possibility for $M$ is that it is defined
by $y''\,=\, 0$, $z''\,=\, l$.

Applying $X^{(2)}_\alpha$ to $z''\,=\, l$ we get $-2y'z''\,=\, 0$
on $M$ and if $y'$ is identically
zero on some open set of $M$, then again we would get an independent equation
$y'\,=\,0$ and the local equations for $M$ would be $y''\,=\, 0$, $z''\,=\, l$,
$y'\,=\, 0$ and the dimension
of $M$ would go down. Thus, $z''$ is identically $0$ on $M$ and the equations for
$M$ are indeed $y''\,=\, 0\,=\, z''$.

It remains to check that this is indeed an invariant submanifold of
${\rm sl}(4, {\mathbb R})$ in these
coordinates. From the equations for the second prolongations of the generators, it
only remains to check invariance under $X^{(2)}_{-\gamma}$.

We have
$$X^{(2)}_\alpha\,=\, y\partial_x-(y')^2\partial_{y'}-y'z'\partial_{z'}-
3y'y''\partial_{y''}-(y''z'+2y'z'')\partial_{z''}\, ,\ \ X^{(2)}_{\gamma}\,=\,
\partial_z$$
$$X^{(2)}_{-\alpha}\,=\, x\partial_y+\partial_{y'}\, ,\ \
X^{(2)}_{-\beta}\,=\, y\partial_z+y'\partial_{z'}+
y''\partial_{z''}
$$
and $X^{(2)}_{-\gamma}$ is a sum of fields and we need to just consider the contributions
in the $\partial_{y''}$ and $\partial_{z''}$ directions; these contributions are:
$$
-(2y''(xz'+z)+y'(xz''+2z'))\partial_{y''}-(3xz'z''+2(z')^2+2zz'')\partial_{z''}\, ,
$$
$$
(y''z+2y'z'+yz'')\partial_{y''}
\ \ \text{ and }\ \ 2((z')^2+zz'')\partial_{z''}
$$
from the formulas given above; their sum vanishes on the set
$y''\,=\, 0\,=\, z''$. This completes the proof of the proposition.

\section*{Acknowledgements}

I.B. is supported by a J. C. Bose Fellowship. F.M. is indebted to the N.R.F. of 
South Africa for research grant support.

\end{document}